\definecolor{cobalt}{RGB}{61,89,171}
\theoremstyle{plain}
\newtheorem{theorem}{Theorem}[section]
\theoremstyle{definition}
\newtheorem{definition}[theorem]{Definition}
\theoremstyle{remark}
\begin{document}

\title[Bott-Chern and Aeppli cohomology]{On the Bott-Chern and Aeppli cohomology}

\author[D. Angella]{Daniele Angella}

\address[D. Angella]{Centro di Ricerca Matematica ``Ennio de Giorgi''\\
Collegio Puteano, Scuola Normale Superiore\\
Piazza dei Cavalieri 3\\
56126 Pisa, Italy
}

\email{daniele.angella@gmail.com}
\email{daniele.angella@sns.it}


\keywords{complex manifold, non-K\"ahler geometry, Bott-Chern cohomology, Aeppli cohomology, $\partial\overline\partial$-Lemma}
\thanks{The author is supported by the Project PRIN ``Varietà reali e complesse: geometria, topologia e analisi armonica'', by the Project FIRB ``Geometria Differenziale e Teoria Geometrica delle Funzioni'', by SNS GR14 grant ``Geometry of non-Kähler manifolds'', and by GNSAGA of INdAM}
\subjclass[2010]{32Q99, 32C35}

\dedicatory{Dedicated to the memory of Professor Pierre Dolbeault.}

\begin{abstract}
This survey summarizes the results discussed in a talk at ``Bielefeld Geometry \& Topology Days'' held at Bielefeld University in July 2015. We are interested in quantitative and qualitative properties of Bott-Chern cohomology.
We announce new results obtained in \cite{angella-tardini-1} jointly with Nicoletta Tardini.
In particular, we prove an upper bound of the dimensions of Bott-Chern cohomology in terms of Hodge numbers. We also introduce a notion, called Schweitzer qualitative property, which encodes the existence of a non-degenerate pairing in Bott-Chern cohomology, like the Poincar\'e duality for the de Rham cohomology. We show that this property characterizes the $\partial\overline\partial$-Lemma.
\end{abstract}

\maketitle

\date{}

\section*{Introduction}

We are aimed at decoding some of the information on the complex geometry of a compact complex manifold from the cohomologies associated to its double complex of differential forms.
In particular, we focus on the information contained in the Bott-Chern and Aeppli cohomologies. They provide, in a sense, a bridge between the holomorphic contents of the Dolbeault cohomology, and the topological contents of the de Rham cohomology. In this sense, it is expected that they both provide a better control on the holomorphic structure, (see, e.g., \cite{angella-tomassini-3} and Theorem \ref{thm:main-thm},) and furnish natural tools for treating geometric aspects, (see, e.g., \cite{tosatti-weinkove}). A possible link between these two settings would be provided by a proof of the conjecture that {\itshape compact complex manifolds satisfying the $\partial\overline\partial$-Lemma admit balanced metrics in the sense of Michelsohn}, see \cite[\S6]{popovici}.

\medskip

In this survey, we focus on quantitative properties of Bott-Chern and Aeppli cohomologies towards the study of their qualitative properties.

More precisely, as for the ``quantitative'' aspects, we recall a result proven by the author and A. Tomassini in \cite{angella-tomassini-3}. It states that, once fixed the topological structure, there is a lower bound on the dimension of the Bott-Chern cohomology in terms of the Betti numbers. See \cite[Theorem A]{angella-tomassini-3} or Theorem \ref{thm:frolicher-bc} for a precise statement. Moreover, the lower bound is attained if and only if the complex manifold satisfies the $\partial\overline\partial$-Lemma, (that is, the identity induces natural isomorphisms between Bott-Chern, Aeppli, Dolbeault cohomologies). We announce here an upper bound for the Bott-Chern cohomology in terms of Hodge numbers, obtained in joint work with Nicoletta Tardini: further details and complete proofs can be found in \cite{angella-tardini-1}. More precisely, in Theorem \ref{thm:upper-bound}, we prove that, on a compact complex manifold $X$ of complex dimension $n$, for any $k\in\mathbb{Z}$, there holds
 $$ \sum_{p+q=k} \dim_\mathbb{C} H^{p,q}_{A}(X) \;\leq\; (n+1) \, \left( \sum_{p+q=k} \dim_\mathbb{C} H^{p,q}_{\overline\partial}(X) + \sum_{p+q=k+1} \dim_\mathbb{C} H^{p,q}_{\overline\partial}(X) \right) \;. $$
Note that a topological upper bound is not possible.

As a consequence, we get that the difference $\sum_{p+q=k} \left( \dim_\mathbb{C} H^{p,q}_{BC}(X) - \dim_\mathbb{C} H^{p,q}_{A}(X) \right)$ is bounded from both above and below by Hodge numbers. Such a quantity yields another characterization of the $\partial\overline\partial$-Lemma: in Theorem \ref{thm:char-deldelbar-minus}, in joint work with Nicoletta Tardini \cite{angella-tardini-1}, we show that a compact complex manifold $X$ satisfies the $\partial\overline\partial$-Lemma if and only if
$$ \sum_{k\in\mathbb{Z}} \left| \sum_{p+q=k} \left( \dim_\mathbb{C} H^{p,q}_{BC}(X) - \dim_\mathbb{C} H^{p,q}_{A}(X) \right) \right| \;=\; 0 \;. $$
This has to be compared with the characterization in \cite[Theorem B]{angella-tomassini-3}, which uses instead the vanishing of the non-negative degrees $\Delta^k$ in \eqref{eq:non-kahler-degree}:
$$ \sum_{k\in\mathbb{Z}} \left( \sum_{p+q=k} \left( \dim_\mathbb{C} H^{p,q}_{BC}(X) + \dim_\mathbb{C} H^{p,q}_{A}(X) \right) - 2\, b_k \right) \;=\; 0 \;, $$
and with \cite[Corollary 4.14]{angella-tomassini-5}, which is deduced from interpretation of complex structures as generalized-complex structures in the sense of N. Hitchin.
Note also that, by the Schweitzer duality between Bott-Chern cohomology and Aeppli cohomology \cite[\S2.c]{schweitzer}, the condition in Theorem \ref{thm:char-deldelbar-minus} can be written just in terms of Bott-Chern cohomology as
$$ \sum_{k\in\mathbb{Z}} \left| \sum_{p+q=k} \dim_\mathbb{C} H^{p,q}_{BC}(X) - \sum_{p+q=2n-k} \dim_\mathbb{C} H^{p,q}_{BC}(X) \right| \;=\; 0 \;; $$
compare with the Poincar\'e duality and the Serre duality.

\medskip

Our final aim would be to attempt the study of ``qualitative'' aspects, namely, of the concrete realization of the algebra structure in Bott-Chern cohomology. Some previous attempts were done in \cite{angella-tomassini-6, tardini-tomassini}, with the final motivation of understanding whether a possible notion of formality {\itshape \`a la} Sullivan for Bott-Chern cohomology makes sense. Roughly speaking, a manifold would be ``formal with respect to Bott-Chern cohomology'' when the Bott-Chern cohomology functor can be made ``concrete'' by means of a zigzag of morphisms of bi-differential bi-graded algebras being quasi-isomorphisms with respect to Bott-Chern cohomology. This is the case when there is a suitable choice of the representatives having by themselves a structure of algebra. (A stronger request would be to ask for harmonic representatives with respect to some Hermitian metric.)
We introduce here a ``qualitative'' notion, motivated by the following observation.
From the geometric point of view, the algebra structure is mainly important in connection also with the Poincar\'e duality. But Hermitian duality does not preserve Bott-Chern cohomology. We will introduce a property, called {\em Schweitzer qualitative property}, from the work in \cite{schweitzer}, that implies in fact the validity of the $\partial\overline\partial$-Lemma, thanks to the above quantitative results, see \cite{angella-tardini-1}.

\bigskip

\noindent{\sl Acknowledgments.}
This note has been written for the Workshop ``Bielefeld Geometry \& Topology Days'' held at Bielefeld University on July 2nd--3rd, 2015. The author warmly thanks the organizer Giovanni Bazzoni for the kind invitation and hospitality. Thanks to him and to all the participants for the enthusiastic and fruitful environment they contributed to. We acknowledge many important discussions with Nicoletta Tardini and Adriano Tomassini on the subject: we thank also Michela Zedda, Giovanni Bazzoni, Oliver Goertsches for interesting conversations and useful suggestions. Thanks to Valentino Tosatti and Jim Stasheff for useful comments that improved the presentation of the paper. The new results that we announce here are obtained in joint work with Nicoletta Tardini, \cite{angella-tardini-1}.

\section{Bott-Chern and Aeppli cohomology}

We study the geometry of compact complex manifolds $X$ as encoded in their double complex of forms, namely, $\left(\wedge^{\bullet,\bullet}X,\partial,\overline\partial\right)$. This is an object of $\mathbf{bba}$, that is, the category of bi-differential $\mathbb{Z}^2$-graded algebras. Note that any compact complex manifold admits two natural structures: a real structure; and a non-degenerate pairing structure. These are encoded also in symmetries of the double complex: conjugation yields a symmetry around the bottom-left/top-right diagonal; the duality yields a symmetry around the bottom-right/top-left diagonal.

We keep in mind the case that the double complex is a direct sum of the following two indecomposable objects\footnote{it seems that this assumption can be assumed without loss of generality, see the answer by Greg Kuperberg in the MathOverflow discussion at \url{http://mathoverflow.net/questions/25723/}, where he quotes Mikhail Khovanov}:
\begin{itemize}
 \item {\em zigzags} of length $\ell+1$, where $\ell\in\mathbb{N}$ counts the number of arrows;
 $$ \xymatrix{
 \cdots & \bullet &&& \\
 & \bullet \ar[u]^{\overline\partial} \ar[r]_{\partial} & \bullet & & \\
 && \ddots \ar[u]^{\overline\partial} \ar[r]_{\partial} & \bullet & \\
 && & \bullet \ar[u]^{\overline\partial} & \cdots \\
 } $$
 
 \item {\em squares} of isomorphisms:
 $$ \xymatrix{
 \bullet \ar[r]^{\partial}_{\simeq} & \bullet \\
 \bullet \ar[u]^{\overline\partial}_{\simeq} \ar[r]_{\partial}^{\simeq} & \bullet \ar[u]_{\overline\partial}^{\simeq}
 } $$
\end{itemize}
In particular, zigzags of length one are called {\em dots}. By Hodge theory and elliptic PDE theory, the cohomologies have finite dimension: whence the number of zigzag is finite, and the number of squares is infinite.

For example, the double complex associated to a hypothetical complex structure on the $6$-dimensional sphere $\mathbb{S}^6$ should be as in Figure \ref{fig:S6}. This example is constructed by using the results in \cite{ugarte-sphere} on the Fr\"olicher spectral sequence of a hypothetical complex structure on the six-sphere.

\begin{figure}
\begin{center}
\begin{tikzpicture}
\newcommand\un{2}

\draw[help lines, step=\un] (0,0) grid (4*\un,4*\un);

\foreach \x in {0,...,3}
  \node at (\un*.5+\un*\x,-.3) {\x};
\foreach \y in {0,...,3}
  \node at (-.3,\un*.5+\un*\y) {\y};

\coordinate (A) at (0*\un+1/2*\un, 0*\un+1/2*\un);
\coordinate (B) at (0*\un+1/4*\un, 1*\un+1/4*\un);
\coordinate (C) at (0*\un+1/2*\un, 1*\un+3/4*\un);
\coordinate (D) at (0*\un+1/4*\un, 2*\un+1/2*\un);
\coordinate (E) at (1*\un+1/2*\un, 0*\un+3/4*\un);
\coordinate (F) at (1*\un+1/4*\un, 0*\un+1/4*\un);
\coordinate (G) at (1*\un+1/4*\un, 1*\un+1/4*\un);
\coordinate (H) at (1*\un+3/4*\un, 1*\un+1/4*\un);
\coordinate (I) at (1*\un+1/2*\un, 1*\un+3/4*\un);
\coordinate (L) at (1*\un+1/4*\un, 2*\un+1/2*\un);
\coordinate (M) at (1*\un+3/4*\un, 2*\un+1/2*\un);
\coordinate (N) at (2*\un+1/2*\un, 0*\un+3/4*\un);
\coordinate (O) at (2*\un+1/4*\un, 0*\un+1/4*\un);
\coordinate (P) at (2*\un+3/4*\un, 1*\un+1/4*\un);
\coordinate (Q) at (2*\un+3/4*\un, 1*\un+3/4*\un);
\coordinate (R) at (2*\un+3/4*\un, 2*\un+1/2*\un);
\coordinate (S) at (3*\un+3/4*\un, 1*\un+3/4*\un);

\newcommand{\raggio}{1*\un pt}
\fill (A) circle (\raggio);
\fill (B) circle (\raggio);
\fill (C) circle (\raggio);
\fill (D) circle (\raggio);
\fill (E) circle (\raggio);
\fill (H) circle (\raggio);
\fill (I) circle (\raggio);
\fill (L) circle (\raggio);
\fill (M) circle (\raggio);
\fill (N) circle (\raggio);
\fill (O) circle (\raggio);
\fill (P) circle (\raggio);
\fill (S) circle (\raggio);

\draw (B) -- (G) -- (F) -- (O);
\draw (C) -- (I);
\draw (D) -- (L);
\draw (E) -- (N);
\draw (H) -- (P);
\draw (M) -- (R) -- (Q) -- (S);

\begingroup\makeatletter\def\f@size{6}\check@mathfonts
\node at (0*\un+1/2*\un, 0*\un+1/2*\un-.3) {$1$};
\node at (0*\un+1/4*\un, 1*\un+1/4*\un-.3) {$\alpha$};
\node at (0*\un+1/2*\un+.2, 1*\un+3/4*\un-.3) {$h^{0,2}+1-\alpha$};
\node at (0*\un+1/4*\un+.1, 2*\un+1/2*\un-.3) {$h^{0,2}-\beta$};
\node at (1*\un+1/2*\un+.1, 0*\un+3/4*\un-.3) {$h^{1,0}$};
\node at (1*\un+3/4*\un+.6, 1*\un+1/4*\un-.3) {$h^{1,1}-h^{0,2}+\alpha-1$};
\node at (1*\un+3/4*\un, 2*\un+1/2*\un-.3) {$\beta$};
\endgroup

\end{tikzpicture}
\end{center}
\label{fig:S6}
\caption{Main structure of the double complex associated to a hypothetical complex structure on the $6$-dimensional sphere.
The labels count the number of respective objects and $\alpha$, $h^{0,2}$, $\beta$, $h^{1,0}$, $h^{1,1}$ are unknown non-negative integers.
In this diagram, the infinite squares and the arrows arising from symmetries have been removed.}
\end{figure}
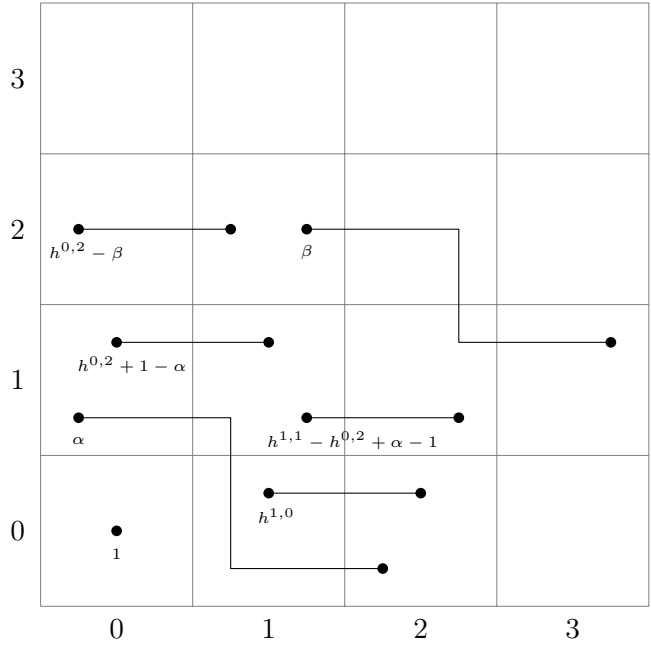

\medskip

Several cohomologies can be defined associated to a double complex.
The first ones that appear are the de Rham cohomology,
$$ H_{dR}(X;\mathbb{C}) \;=\; \frac{\ker (\partial+\overline\partial)}{\mathrm{imm} (\partial+\overline\partial)} \circ \mathrm{Tot} \;, $$
(here, $\mathrm{Tot}\colon \mathbf{bba}\to\mathbf{dga}$ denotes the totalization functor,
$$ \mathrm{Tot}(A^{\bullet,\bullet},\partial,\overline\partial) \;:=\; \left(\bigoplus_{p+q=\bullet}A^{p,q},\partial+\overline\partial\right) \;, $$
the category $\mathbf{dga}$ having differential $\mathbb{Z}$-graded algebras as objects,) and the Dolbeault cohomology and its conjugate,
$$ H_{\overline\partial}(X) \;=\; \frac{\ker \overline\partial}{\mathrm{imm} \overline\partial} \qquad \text{ and } \qquad H_{\partial}(X) \;=\; \frac{\ker \partial}{\mathrm{imm} \partial} \;. $$
In the model we keep in mind, constituted by squares and zigzags, the Dolbeault cohomology is easily computed by erasing vertical arrows with their ends from the diagram, and counting the remaining points.

Notice that Dolbeault and de Rham cohomology does not suffice, in general, for detecting the complete structure of the double complex. For example, zigzags of odd length do not contribute to the difference between Dolbeault and de Rham cohomology; in other words, to the higher terms in the Fr\"olicher spectral sequence. This means that symmetric zigzags of odd length cannot be detected. For example, the following diagrams have the same de Rham and Dolbeault cohomologies\footnote{this example was suggested by Michela Zedda}:

\begin{center}

\begin{minipage}{.45\textwidth}
\begin{center}
\begin{tikzpicture}

\draw[help lines] (0,0) grid (4,4);

\foreach \x in {0,...,3}
  \node at (.5+\x,-.3) {\x};
\foreach \y in {0,...,3}
  \node at (-.3,.5+\y) {\y};

\coordinate (a) at (1.5,2.5);
\coordinate (b) at (2.5,1.5);

\fill (a) circle (2.5pt);
\fill (b) circle (2.5pt);
  
\end{tikzpicture}
\end{center}
\end{minipage}
\begin{minipage}{.45\textwidth}
\begin{center}
\begin{tikzpicture}

\draw[help lines] (0,0) grid (4,4);

\foreach \x in {0,...,3}
  \node at (.5+\x,-.3) {\x};
\foreach \y in {0,...,3}
  \node at (-.3,.5+\y) {\y};

\coordinate (a1) at (1.5+.2,2.5+.2);
\coordinate (b1) at (2.5+.2,1.5+.2);
\coordinate (a2) at (1.5-.2,2.5-.2);
\coordinate (b2) at (2.5-.2,1.5-.2);
\coordinate (c) at (2.5+.2,2.5+.2);
\coordinate (d) at (1.5-.2,1.5-.2);

\fill (a1) circle (2.5pt);
\fill (b1) circle (2.5pt);
\fill (a2) circle (2.5pt);
\fill (b2) circle (2.5pt);
\fill (c) circle (2.5pt);
\fill (d) circle (2.5pt);

\draw (a1) -- (c);
\draw (b1) -- (c);
\draw (a2) -- (d);
\draw (b2) -- (d);

\end{tikzpicture}
\end{center}
\end{minipage}

\end{center}

The last discussion serves us as a motivation for introducing Bott-Chern and Aeppli cohomologies. Indeed, the above diagrams differ as to the number of corners; whence we get the need for having an invariant that counts the corners. In \cite{bott-chern, aeppli}, {\em Bott-Chern cohomology} and {\em Aeppli cohomology} are defined as
$$ H_{BC}^{\bullet,\bullet}(X) \;=\; \frac{\ker\partial\cap\ker\overline\partial}{\mathrm{imm}\partial\overline\partial}
\qquad \text{ and } \qquad
H_{A}^{\bullet,\bullet}(X) \;=\; \frac{\ker\partial\overline\partial}{\mathrm{imm}\partial+\mathrm{imm}\overline\partial} \;. $$
As for the Bott-Chern cohomology, it is easy to recognize that it counts the corners possibly having ingoing arrows, except for the squares\footnote{in the following diagram, the filled dot at the top-right is a generator of the Bott-Chern cohomology. Indeed, it is $\partial$-closed and $\overline\partial$-closed: this is pictured by the fact that there is no outgoing arrow. It is not $\partial\overline\partial$-exact, that is, it is not the top-right corner of a square. But it can be $\partial$-exact and/or $\overline\partial$-exact, whence the dotted arrows in the notation. Analogous considerations hold for the diagram for the Aeppli cohomology.}:
$$ \xymatrix{
\circ \ar@{-->}[r] & \bullet \\
&\circ \ar@{-->}[u]
}$$
Dually, the Aeppli cohomology counts the corners possibly having outgoing arrows, except for the squares:
$$ \xymatrix{
\circ  & \\
\bullet \ar@{-->}[r] \ar@{-->}[u] & \circ 
}$$

Finally, we have the following diagram, where the maps are morphisms of either $\mathbb{Z}$-graded or $\mathbb{Z}^2$-graded algebras naturally induced by the identity:
$$ \xymatrix{
  & H^{\bullet,\bullet}_{BC}(X) \ar[d]\ar[ld]\ar[rd] & \\
  H^{\bullet,\bullet}_{\partial}(X) \ar[rd] \ar@{=>}[r] & H^{\bullet}_{dR}(X;\mathbb{C}) \ar[d] & H^{\bullet,\bullet}_{\overline\partial}(X) \ar[ld] \ar@{=>}[l] \\
  & {\phantom{\;.}} H^{\bullet,\bullet}_{A}(X) \;. &
} $$
Here, the link between de Rham and (conjugate) Dolbeault cohomology is the Fr\"olicher spectral sequence, naturally arising from the structure of double-complex.

A compact complex manifold $X$ is said to satisfy the {\em $\partial\overline\partial$-Lemma} if the natural map $H^{\bullet,\bullet}_{BC}(X)\to H^{\bullet,\bullet}_{A}(X)$ induced by the identity is injective. This is equivalent to all the maps in the above diagram being isomorphisms, see \cite[Lemma 5.15]{deligne-griffiths-morgan-sullivan}. This is also equivalent to asking that the double complex associated to $X$ be a direct sum of squares and dots, \cite[Proposition 5.17]{deligne-griffiths-morgan-sullivan}.

\section{Quantitative properties of Bott-Chern cohomology}
In this section, we are interested in determining {\itshape quantitative} relations between the dimensions of the above cohomologies. This is ultimately related to what sequences of integers can appear as dimensions of cohomologies of double complexes of compact complex manifolds.
The known restrictions on such sequences are essentially:
\begin{itemize}
 \item restrictions arising from dimension, compactness, and connectedness;
 \item symmetries arising from the real structure and the non-degenerate pairing structure;
 \item inequalities of algebraic type as in \cite[Theorem 2]{frolicher}, \cite[Theorem A]{angella-tomassini-3}, and Theorem \ref{thm:upper-bound};
 \item inequalities of analytic type, as the ones that hold for compact complex surfaces, see \cite{angella-dloussky-tomassini} and subsequent work.
\end{itemize}

\medskip

A first result in this direction is the {\em Fr\"olicher inequality} in \cite[Theorem 2]{frolicher}. It states that the structure of double complex yields a spectral sequence of the form
$$ H^{\bullet,\bullet}_{\overline\partial} \Rightarrow H^\bullet_{dR}(X;\mathbb{C}) \;, $$
whence the inequality
$$ \text{for any }k\in\mathbb{Z}\;, \qquad h^{k}_{\overline\partial} - b_k \;\geq\; 0 \;. $$
(As a matter of notation, we write, e.g., $h^k_{\overline\partial}:=\sum_{p+q=k}\dim_{\mathbb{C}} H^{p,q}_{\overline\partial}(X)$; moreover, $b_k$ denotes the $k$th Betti number of $X$.)

The discussion in the previous paragraph motivates a similar inequality for the Bott-Chern cohomology. Indeed, recall that Dolbeault cohomology does not care horizontal arrows, conjugate Dolbeault cohomology does not care vertical arrows, Bott-Chern cohomology counts possibly incoming corners, Aeppli cohomology counts possibly outgoing corners, with the exception, in any case, of squares. Hence, just by combinatorial arguments, one recognizes that the sum of the dimension of the Bott-Chern and Aeppli cohomologies is greater than or equal than the sum of the dimension of Dolbeault and conjugate Dolbeault cohomologies, which is greater than or equal than twice the Betti number. Moreover, both equalities hold if and only if the double complex is direct sum of squares and dots, that is, if the manifold satisfies the $\partial\overline\partial$-Lemma. This heuristically explains the results in \cite{angella-tomassini-3}, to which we refer for details. The proof there uses the Varouchas exact sequences in \cite{varouchas}.

\begin{theorem}[{\cite[Theorem A, Theorem B]{angella-tomassini-3}}]\label{thm:frolicher-bc}
 Let $X$ be a compact complex manifold. Then, for any $k\in\mathbb{Z}$, the $k$th {\em non-$\partial\overline\partial$-degree}
 \begin{equation}\label{eq:non-kahler-degree}
 \Delta^k(X) \;:=\; h^{k}_{BC} + h^{k}_{A} - 2\, b_k
 \end{equation}
 is a non-negative integer. Moreover, $X$ satisfies the $\partial\overline\partial$-Lemma if and only if $\Delta^k(X)=0$ for any $k\in\mathbb{Z}$; equivalently,
 $$ \sum_{k\in\mathbb{Z}}\Delta^k(X) \;=\; 0 \;. $$
\end{theorem}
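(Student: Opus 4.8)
The plan is to compute $\Delta^\bullet$ via the decomposition of the double complex $\left(\wedge^{\bullet,\bullet}X,\partial,\overline\partial\right)$ into its indecomposable summands, which by the structure recalled above are squares and zigzags. Since Bott-Chern, Aeppli, Dolbeault and de Rham cohomology are all finite-dimensional, and since (as the computation below shows) a zigzag with at least one arrow contributes a positive amount to $h^\bullet_{BC}$, only finitely many zigzags occur; the square summands, although possibly infinitely many, contribute $0$ to each of these cohomologies. All five cohomology theories are additive under direct sums, so the numbers $b_k$, $h^k_{\overline\partial}$, $h^k_{\partial}$, $h^k_{BC}$, $h^k_{A}$, hence $\Delta^k$, add over the summands. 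A dot contributes $1$ to each of $b_k$, $h^k_{BC}$, $h^k_{A}$ in its own total degree $k$ and $0$ elsewhere, hence $0$ to every $\Delta^k$; thus $\Delta^k(X)$ is the sum of the contributions of the zigzags of length $\geq 2$, and the whole statement reduces to understanding these.

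So the core is the following model computation. Let $Z$ be a zigzag with $\ell\geq 1$ arrows. Since $\partial$ and $\overline\partial$ both raise the total degree, and since the double-complex relation $\partial\overline\partial=-\overline\partial\partial$ together with the absence of squares forces $\partial\overline\partial\equiv 0$ on $Z$, one checks that $Z$ is supported in two consecutive total degrees $m$ and $m+1$, that every arrow runs from the lower degree $m$ to the higher degree $m+1$, that the degree-$m$ vertices are exactly the sources and generate $H^{\bullet,\bullet}_{A}(Z)$, that the degree-$(m+1)$ vertices are exactly the sinks and generate $H^{\bullet,\bullet}_{BC}(Z)$ (this is exactly the corners bookkeeping recalled earlier), and that the totalisation of $Z$ is a two-term complex $V_m\to V_{m+1}$ whose differential is, up to signs and reordering the vertices along $Z$, the bipartite adjacency matrix of a path graph, hence of maximal rank $\min(\dim V_m,\dim V_{m+1})$. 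Writing $p$ and $q$ for the numbers of vertices of $Z$ in degrees $m$ and $m+1$, one gets $p,q\geq 1$, $|p-q|\leq 1$, $p+q=\ell+1$, $h^m_{A}(Z)=p$, $h^{m+1}_{BC}(Z)=q$, $b_m(Z)=\max(p-q,0)$, $b_{m+1}(Z)=\max(q-p,0)$, all other contributions of $Z$ vanishing. Therefore $\Delta^m(Z)=p-2\max(p-q,0)$ and $\Delta^{m+1}(Z)=q-2\max(q-p,0)$ are both $\geq 0$ — the only nontrivial case being $|p-q|=1$, where the smaller of $p,q$ is still $\geq 1$ — and $\Delta^m(Z)+\Delta^{m+1}(Z)\geq 1$.

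Summing over all summands gives $\Delta^k(X)\geq 0$ for every $k\in\mathbb{Z}$; moreover $\sum_{k}\Delta^k(X)=0$ if and only if no zigzag of length $\geq 2$ appears, i.e. if and only if the double complex of $X$ is a direct sum of squares and dots, which by \cite[Proposition 5.17]{deligne-griffiths-morgan-sullivan} is equivalent to the $\partial\overline\partial$-Lemma; and since each $\Delta^k$ is a non-negative integer, vanishing of the sum is equivalent to vanishing of every $\Delta^k$. (The converse implication can also be seen directly: under the $\partial\overline\partial$-Lemma, \cite[Lemma 5.15]{deligne-griffiths-morgan-sullivan} makes all the natural maps isomorphisms, so $h^k_{BC}=h^k_{A}=b_k$ and $\Delta^k=0$.) The step I expect to be the real obstacle is not this bookkeeping but making the decomposition into indecomposables rigorous for the genuinely infinite-dimensional double complex of forms, and controlling the infinitely many square summands — exactly the issue the footnote in Section~1 alludes to. One can sidestep it, as is done in \cite{angella-tomassini-3}: there one inserts $H^{\bullet,\bullet}_{BC}$ and $H^{\bullet,\bullet}_{A}$ into the Varouchas exact sequences of \cite{varouchas}, whose other terms are subquotients of Dolbeault cohomology, to obtain $h^k_{BC}+h^k_{A}\geq h^k_{\overline\partial}+h^k_{\partial}$, and then combines this with the Fr\"olicher inequality $h^k_{\overline\partial}\geq b_k$ recalled above, applied to $X$ and to its conjugate, to get $\Delta^k\geq 0$, the equality case being read off the exact sequences.
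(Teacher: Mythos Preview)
Your argument is correct and is precisely a careful write-up of the combinatorial heuristic the paper sketches just before the theorem: count, on each indecomposable summand, the sources (Aeppli classes), the sinks (Bott-Chern classes), and the rank of the totalised differential, and observe that every zigzag of positive length contributes non-negatively to each $\Delta^k$ and strictly positively to their sum. The paper does not give a self-contained proof but refers to \cite{angella-tomassini-3}, where the inequality is obtained via the Varouchas exact sequences together with the Fr\"olicher inequality --- exactly the alternative you describe in your last paragraph to bypass the decomposition issue; so your write-up matches the paper's heuristic and correctly identifies the rigorous route taken in the cited reference.
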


In a sense, the previous result states that an un-natural isomorphism (the one given by equality of the dimensions --- a quantitative property) ensures a natural isomorphism (the one induced by identity --- a qualitative property).

As a special case, consider compact complex surfaces.
In \cite{angella-dloussky-tomassini} and in further discussions with A. Tomassini and M. Verbitsky, we showed that the non-$\partial\overline\partial$-degree of compact complex surfaces are topological invariants. More precisely, $\Delta^1=0$ and $\Delta^2\in\{0,2\}$ according to the complex surface admitting K\"ahler metrics. (Notice that, for compact complex surfaces, the $\partial\overline\partial$-Lemma is in fact equivalent to the existence of a K\"ahler metric, by the Lamari and the Buchdahl criterion.)

\medskip

We announce now an upper bound for the dimension of Bott-Chern cohomology in terms of Hodge numbers, obtained in joint work with Nicoletta Tardini \cite{angella-tardini-1}. Note that we cannot get a just topological upper bound. That is, an opposite Fr\"olicher inequality cannot be obtained. This is because of the even-length zigzags, which contribute to the Dolbeault cohomology but not to the de Rham cohomology.

\begin{theorem}[{\cite[Theorem 2.1]{angella-tardini-1}}]\label{thm:upper-bound}
Let $X$ be a compact complex manifold of complex dimension $n$. Then, for any $k\in\mathbb{Z}$, there holds
 \begin{eqnarray*}
 h^k_{A} &\leq& \min\{k+1, (2n-k)+1\} \, \left( h^k_{\overline\partial} + h^{k+1}_{\overline\partial} \right) \\[5pt]
 &\leq& (n+1) \, \left( h^k_{\overline\partial} + h^{k+1}_{\overline\partial} \right) \;,
 \end{eqnarray*}
 whence also
 \begin{eqnarray*}
 h^k_{BC} &\leq& \min\{k+1, (2n-k)+1\} \, \left( h^k_{\overline\partial} + h^{k-1}_{\overline\partial} \right) \\[5pt]
 &\leq& (n+1) \, \left( h^k_{\overline\partial} + h^{k-1}_{\overline\partial} \right) \;.
 \end{eqnarray*}
\end{theorem}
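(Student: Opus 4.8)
The plan is to run a combinatorial argument on the decomposition of the double complex $\left(\wedge^{\bullet,\bullet}X,\partial,\overline\partial\right)$ into the indecomposable summands recalled in Section~\ref{...}, namely squares and zigzags (dots being the zigzags with no arrow). All of $H_{\overline\partial}$, $H_\partial$, $H_{BC}$, $H_A$ are additive along such a decomposition, and squares contribute nothing to any of them; so it is enough to understand the contribution of a single zigzag to the relevant numbers and then to sum. First I would record the elementary combinatorics of a zigzag $Z$ with $\ell\geq 1$ arrows: consecutive arrows alternate between $\partial$ and $\overline\partial$ and between ``incoming'' and ``outgoing'', so $Z$ is a monotone staircase; hence $Z$ occupies exactly two consecutive total degrees, say $k$ and $k+1$, and its vertices sit at pairwise distinct bidegrees.

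Next I would identify which cohomology each vertex carries. The Aeppli cohomology of $Z$ is generated by its ``pure source'' vertices (those with no incoming arrow), and a direct inspection of the staircase shows these all lie in the \emph{lower} total degree $k$; thus they lie at distinct points of the single antidiagonal $\{p+q=k\}$, so their number is at most the number of admissible bidegrees there, which is exactly $\min\{k+1,(2n-k)+1\}$. Letting $\mathcal Z_k$ denote the set of summands contributing to $H^{\bullet,\bullet}_A(X)$ in total degree $k$ --- the zigzags with lower degree $k$, together with the dots in total degree $k$ --- this gives
$$ h^k_A \;=\; \sum_{Z\in\mathcal Z_k}\dim_{\mathbb C}H_A(Z) \;\leq\; \min\{k+1,(2n-k)+1\}\cdot\#\mathcal Z_k \;. $$

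The main point, and the step I expect to be the real obstacle, is then to prove $\#\mathcal Z_k\leq h^k_{\overline\partial}+h^{k+1}_{\overline\partial}$. Each $Z\in\mathcal Z_k$ has all of its Dolbeault cohomology concentrated in total degrees $k$ and $k+1$, so $h^k_{\overline\partial}+h^{k+1}_{\overline\partial}\geq\sum_{Z\in\mathcal Z_k}\dim_{\mathbb C}H_{\overline\partial}(Z)$; it would suffice that every summand in $\mathcal Z_k$ carry at least one unit of Dolbeault cohomology. This fails: a zigzag both of whose endpoints carry a $\overline\partial$-arrow has vanishing Dolbeault cohomology yet still contributes sources to $H_A$. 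The fix is the conjugation symmetry of the double complex: computing $H_{\overline\partial}$ and $H_\partial$ by erasing, respectively, the $\overline\partial$- and the $\partial$-arrows, one gets the identity $\dim_{\mathbb C}H_{\overline\partial}(Z)+\dim_{\mathbb C}H_{\partial}(Z)=2$ for every non-dot zigzag, together with $\dim_{\mathbb C}H_{\partial}(Z)=\dim_{\mathbb C}H_{\overline\partial}(\overline Z)$, where $\overline Z$ is the conjugate summand, which again lies in $\mathcal Z_k$ since conjugation preserves total degrees. Grouping the non-dot zigzags of $\mathcal Z_k$ into conjugate pairs $\{Z,\overline Z\}$ and self-conjugate singletons, each pair contributes $2$ and each singleton $1$ to $\sum_{Z\in\mathcal Z_k}\dim_{\mathbb C}H_{\overline\partial}(Z)$, while each dot contributes $1$; hence $\sum_{Z\in\mathcal Z_k}\dim_{\mathbb C}H_{\overline\partial}(Z)\geq\#\mathcal Z_k$, which closes the estimate and yields $h^k_A\leq\min\{k+1,(2n-k)+1\}\,(h^k_{\overline\partial}+h^{k+1}_{\overline\partial})$; the bound by $(n+1)$ is immediate.

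Finally, the Bott-Chern statement follows by duality rather than by repeating the argument with sinks in place of sources: the Schweitzer pairing gives $h^{p,q}_{BC}=h^{n-p,n-q}_A$, hence $h^k_{BC}=h^{2n-k}_A$, while Serre duality gives $h^j_{\overline\partial}=h^{2n-j}_{\overline\partial}$ and $\min\{(2n-k)+1,k+1\}=\min\{k+1,(2n-k)+1\}$; applying the Aeppli inequality in total degree $2n-k$ then turns it into $h^k_{BC}\leq\min\{k+1,(2n-k)+1\}\,(h^k_{\overline\partial}+h^{k-1}_{\overline\partial})$. To summarize, the two delicate ingredients are that a zigzag's Aeppli generators all sit at distinct points of a single antidiagonal in its lower total degree (this produces the constant $\min\{k+1,(2n-k)+1\}$), and the conjugation trick that repairs the zigzags with no Dolbeault cohomology; everything else is bookkeeping over the indecomposable decomposition.
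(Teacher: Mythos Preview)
Your argument is correct and follows essentially the same route as the paper's sketch: decompose into squares and zigzags, use that each zigzag between total degrees $k$ and $k+1$ contributes exactly two classes to $H_{\overline\partial}\oplus H_\partial$ (your ``conjugation trick'' is just the per-summand version of the global identity $h^j_{\partial}=h^j_{\overline\partial}$) and at most $\min\{k+1,(2n-k)+1\}$ Aeppli classes at degree $k$, and then pass to Bott--Chern via the Schweitzer and Serre dualities. The only cosmetic difference is that the paper bounds the per-zigzag Aeppli contribution first by $\lfloor\ell/2\rfloor+1$ and then by $\min\{k+1,(2n-k)+1\}$, while you go straight to the antidiagonal count.
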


\begin{proof}
 We give just the idea behind the proof. We refer to \cite{angella-tardini-1} for details. The point is that a contribution to Aeppli cohomology arises from zigzags of positive length $\ell+1$. Any such zigzag, when placed between total degrees $k$ and $k+1$, creates exactly two non trivial classes in either Dolbeault or conjugate Dolbeault cohomology at either degree $k$ or degree $k+1$, and at most $\lfloor\ell/2\rfloor+1$ classes in Aeppli cohomology at degree $k$. (In particular, $\lfloor\ell/2\rfloor+1\leq \min\{k+1,(2n-k)+1\}\leq n+1$.) The inequality for Bott-Chern cohomology follows from the Schweitzer duality \cite[\S2.c]{schweitzer} and by the Serre duality.
\end{proof}

\medskip

In particular, for any $k\in\mathbb{Z}$, there holds
$$ -2(n+1) \, \left( h^{k-1}_{\overline\partial} + h^{k}_{\overline\partial} \right) \;\leq\; h^k_{A}-h^{k}_{BC} \;\leq\; 2(n+1) \, \left( h^{k}_{\overline\partial} + h^{k+1}_{\overline\partial} \right) \;. $$
In the following result, from \cite{angella-tardini-1}, we give a characterization of the $\partial\overline\partial$-Lemma in terms of the above inequality.

\begin{theorem}[{\cite[Theorem 3.1]{angella-tardini-1}}]\label{thm:char-deldelbar-minus}
 A compact complex manifold $X$ satisfies the $\partial\overline\partial$-Lemma if and only if
 $$ \sum_{k\in\mathbb{Z}} \left| h^{k}_{BC} - h^{k}_{A} \right| \;=\; 0 \;. $$
\end{theorem}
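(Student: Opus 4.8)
The plan is to read off $h^{k}_{BC}$ and $h^{k}_{A}$ from the decomposition of the double complex $\left(\wedge^{\bullet,\bullet}X,\partial,\overline\partial\right)$ into squares, dots, and zigzags, keeping track of the total degrees in which each indecomposable summand produces cohomology. One implication is formal: if $X$ satisfies the $\partial\overline\partial$-Lemma then the natural map $H^{\bullet,\bullet}_{BC}(X)\to H^{\bullet,\bullet}_{A}(X)$ is an isomorphism in each bidegree, so $h^{k}_{BC}=h^{k}_{A}$ for every $k$ and the sum of absolute values vanishes. The content is the converse: one must show that the hypothesis $h^{k}_{BC}=h^{k}_{A}$ for all $k\in\mathbb{Z}$ (equivalently, the vanishing of the sum, each summand being nonnegative) forces the double complex to contain no zigzag of length $\geq 2$, hence to be a direct sum of squares and dots, which is equivalent to the $\partial\overline\partial$-Lemma by \cite[Proposition 5.17]{deligne-griffiths-morgan-sullivan}.

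First I would record the contribution of each indecomposable summand. A square contributes nothing to $H_{BC}$ nor to $H_{A}$; a dot in total degree $d$ contributes exactly $1$ to both $h^{d}_{BC}$ and $h^{d}_{A}$, hence cancels in the difference; and — this is the step that requires care — a zigzag $Z$ with $\ell\geq 1$ arrows occupies exactly the two consecutive total degrees $m_Z$ and $m_Z+1$, with all of its Aeppli classes (the source corners) lying in the lower degree $m_Z$, all of its Bott-Chern classes (the sink corners) lying in the upper degree $m_Z+1$, and with at least one Bott-Chern class. Setting $T_m:=\sum_{Z\colon m_Z=m}h_{BC}(Z)$ and $S_m:=\sum_{Z\colon m_Z=m}h_{A}(Z)$, additivity of $H_{BC}$ and $H_{A}$ over direct summands then gives, for every $k$,
$$ h^{k}_{BC}-h^{k}_{A} \;=\; T_{k-1}-S_{k} \;. $$
Now suppose, towards a contradiction, that at least one zigzag of length $\geq 2$ occurs; since there are only finitely many zigzags, $M:=\max_{Z}m_Z$ exists. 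No zigzag lies in degrees $\{M+1,M+2\}$, so $S_{M+1}=0$, while $T_{M}\geq 1$ because there is a zigzag with $m_Z=M$ and each such zigzag carries a Bott-Chern class. Hence $h^{M+1}_{BC}-h^{M+1}_{A}=T_{M}-S_{M+1}\geq 1>0$, contradicting the hypothesis. Therefore no zigzag of length $\geq 2$ occurs, and $X$ satisfies the $\partial\overline\partial$-Lemma.

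The main obstacle is the structural claim in the middle: that along a zigzag the source corners (which span $H_{A}(Z)$) sit in strictly smaller total degree than the sink corners (which span $H_{BC}(Z)$). I would deduce this from the observation that the arrows of a zigzag alternate not only in type ($\partial$ versus $\overline\partial$) but also in orientation — two consecutive equally oriented arrows would close up into a square, contradicting indecomposability — so that the vertices of a zigzag alternate between the two total degrees $m_Z$ and $m_Z+1$, the sources being exactly the vertices in degree $m_Z$ and the sinks exactly those in degree $m_Z+1$. This is the same analysis underlying the estimate $h_{A}(Z)\leq\lfloor\ell/2\rfloor+1$ used in the proof of Theorem \ref{thm:upper-bound}. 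The remaining verifications are routine; full details are in \cite{angella-tardini-1}.
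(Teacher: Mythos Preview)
Your argument is correct and follows essentially the same strategy as the paper's sketch: both use the decomposition into squares, dots, and zigzags, observe that a zigzag contributes its Bott-Chern classes only on the upper anti-diagonal and its Aeppli classes only on the lower one, and then force a contradiction at an extremal total degree. The only cosmetic difference is that the paper starts at total degree $0$ (where no arrow can enter) and propagates upward, whereas you look at the maximal $m_Z$ and read off $h^{M+1}_{BC}-h^{M+1}_{A}>0$ directly; these are dual versions of the same idea.
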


\begin{proof}
 We give the idea of the proof, referring to \cite{angella-tardini-1} for details. Recall that the Bott-Chern cohomology counts the corners with possible incoming arrows, and the Aeppli cohomology counts the corners with possible outcoming arrows, with the exceptions of squares. Therefore the hypothesis can be restated as: for any anti-diagonal, the number of ingoing arrows equals the number of outgoing arrows, except for squares. Since no ingoing arrow can enter the anti-diagonal of total degree $0$, it follows that there is no zigzag of positive length in the whole diagram. That is, the $\partial\overline\partial$-Lemma holds.
\end{proof}

\section{Qualitative properties of Bott-Chern cohomology}

By investigating the {\itshape qualitative} properties of some cohomology, we mean the study of what and how algebraic structures are induced in cohomology from the space of forms.

\medskip

For example, let us focus on the differential graded algebra structure on the space of forms given by the wedge product and the exterior differential, and on the de Rham cohomology. By the Leibniz rule, it induces a structure of algebra in cohomology.
We look at $H_{dR}$ as a functor inside the category $\mathbf{dga}$ of differential $\mathbb{Z}$-graded algebras:
$$ H_{dR} \colon \mathbf{dga} \rightsquigarrow \mathbf{dga} \;. $$
We ask for what objects $X$ this functor can be made ``concrete'', that is, when it can be realized as a composition of quasi-isomorphisms and formal inverses of quasi-isomorphisms in $\mathbf{dga}$: e.g.,
$$
\xymatrix{
X \ar@{~>}[rrrrrrr] \ar[dr]_{\text{qis}} & & & & & & & H_{dR}(X) \\
 & C_1 & C_2 \ar[l]^{\text{qis}} & C_3 \ar[l]^{\text{qis}}\ar[r]_{\text{qis}} & \cdots \ar[r]_{\text{qis}} & C_{h-1} & C_h \ar[l]^{\text{qis}} \ar[ur]_{\text{qis}} & 
}
$$
By the existence of minimal models, see, e.g., \cite[Theorem in §II.3 at page 29]{wu}, \cite[Proposition 7.7]{bousfield-gugenheim}, this corresponds to the dga of forms and the dga of de Rham cohomology sharing the same model. A compact complex manifold whose double complex of forms satisfies such a property is called {\em formal} in the sense of Sullivan \cite{sullivan}. Note that the minimal model contains information on the rational homotopy groups of the manifold \cite{sullivan}: hence the rational homotopy type of formal manifolds is a formal consequence of their de Rham cohomology. Compact complex manifolds satisfying the $\partial\overline\partial$-Lemma (e.g., compact K\"ahler manifolds) are formal in the sense of Sullivan, \cite[Main Theorem]{deligne-griffiths-morgan-sullivan}. A theory of Dolbeault formality for complex manifolds has been developed in \cite{neisendorfer-taylor}.

Notice that every compact complex manifold is formal {\itshape in the category of $A_\infty$-algebras}\footnote{that is, strongly homotopy associative algebras: the category of $A_\infty$-algebras is equivalent to the category of differential graded co-algebras, by means of the bar construction. We refer to \cite{stasheff, keller, getzler-jones, dotsenko-shadrin-vallette} for definitions and details.}.
This follows from the Homotopy Transfer Principle by T.~V. Kadeishvili \cite{kadeishvili}. See \cite{merkulov, zhou, lu-palmieri-wu-zhang} for the explicit construction of the Merkulov model.
By \cite{lu-palmieri-wu-zhang}, the induced $A_\infty$-structure in cohomology yields the Massey products, up to sign. See also \cite{positselski}.

With these notations, a compact complex manifold is formal in the sense of Sullivan if there exists a system of representatives $H^\bullet$ for the cohomology such that the induced $A_\infty$-structure on $H^\bullet$ is actually an algebra structure.
A particular case is when the chosen representatives are actually the harmonic representatives with respect to some Hermitian metric. This last situation is referred as {\em geometric formality} in the sense of Kotschick \cite{kotschick}. In this case, a possible category to which one can restrict is the category whose objects are dgas with a non-degenerate pairing.

\medskip

Note that the wedge products on forms induces an algebra structure in Bott-Chern cohomology, and just an $H_{BC}$-module structure in Aeppli cohomology.
The duality pairing given by any fixed Hermitian metric is internal in de Rham and Dolbeault cohomologies by Poincar\'e and Serre dualities, and it yields an isomorphism between Bott-Chern and Aeppli cohomologies \cite[\S2.c]{schweitzer}.

This is an important issue, for example, in defining Massey products for Bott-Chern cohomology. In \cite{angella-tomassini-6}, triple Aeppli-Bott-Chern Massey products are defined, starting from Bott-Chern classes, and yielding a class in Aeppli cohomology, up to indeterminacy.

Hence, in \cite{angella-tardini-1}, we propose the following definition, which takes into consideration the structure of non-degenerate pairing.

\begin{definition}[{\cite[Definition 5.1]{angella-tardini-1}}]
 A compact complex manifold $X$ of complex dimension $n$ is said to satisfy the {\em Schweitzer qualitative property} if the natural pairing
 $$ H^{\bullet,\bullet}_{BC}(X) \times H^{\bullet,\bullet}_{BC}(X) \to \mathbb{C} \;, \qquad \left( [\alpha], [\beta] \right) \mapsto \int_X \alpha\wedge\beta $$
 induced by the wedge product and by the pairing with the fundamental class $[X]$ is non-degenerate.
\end{definition}

The above qualitative properties implies a quantitative property which, in turn, characterizes the qualitative property of the $\partial\overline\partial$-Lemma, thanks to Theorem \ref{thm:char-deldelbar-minus}. This is stated in \cite{angella-tardini-1}.

\begin{theorem}[{\cite[Theorem 5.2]{angella-tardini-1}}]\label{thm:main-thm}
 Let $X$ be a compact complex manifold. If it satisfies the Schweitzer qualitative property, then it satisfies the $\partial\overline\partial$-Lemma.
\end{theorem}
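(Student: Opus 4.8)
The plan is to extract from the Schweitzer qualitative property a purely numerical consequence about the Bott-Chern Betti numbers, and then to invoke Theorem \ref{thm:char-deldelbar-minus}.

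First I would observe that the pairing $([\alpha],[\beta])\mapsto\int_X\alpha\wedge\beta$ is block-diagonal with respect to the bidegree decomposition: if $\alpha$ has bidegree $(p,q)$ and $\beta$ has bidegree $(p',q')$, then $\alpha\wedge\beta$ has bidegree $(p+p',q+q')$, so the integral vanishes unless $(p',q')=(n-p,n-q)$. Hence the pairing on $H^{\bullet,\bullet}_{BC}(X)\times H^{\bullet,\bullet}_{BC}(X)$ is the direct sum of the pairings
$$ H^{p,q}_{BC}(X)\times H^{n-p,n-q}_{BC}(X)\longrightarrow\mathbb{C}\,, $$
and non-degeneracy of the total pairing is equivalent to non-degeneracy of each block. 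Since the cohomology spaces are finite-dimensional (by Hodge theory and elliptic PDE theory, as recalled above), a non-degenerate bilinear pairing between $H^{p,q}_{BC}(X)$ and $H^{n-p,n-q}_{BC}(X)$ forces $\dim_\mathbb{C}H^{p,q}_{BC}(X)=\dim_\mathbb{C}H^{n-p,n-q}_{BC}(X)$; summing over $p+q=k$ gives $h^k_{BC}=h^{2n-k}_{BC}$ for every $k\in\mathbb{Z}$.

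Next I would convert this symmetry into the vanishing required by Theorem \ref{thm:char-deldelbar-minus}. By the Schweitzer duality \cite[\S2.c]{schweitzer}, the pairing $H^{p,q}_{BC}(X)\times H^{n-p,n-q}_{A}(X)\to\mathbb{C}$ induced by $\int_X\cdot\wedge\cdot$ is perfect, so $h^{p,q}_{A}=h^{n-p,n-q}_{BC}$ and hence $h^k_{A}=h^{2n-k}_{BC}$ for every $k\in\mathbb{Z}$. Combining with the previous paragraph, $h^k_{BC}=h^k_{A}$ for all $k$, so that $\sum_{k\in\mathbb{Z}}\left|h^k_{BC}-h^k_{A}\right|=0$. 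By Theorem \ref{thm:char-deldelbar-minus}, $X$ then satisfies the $\partial\overline\partial$-Lemma.

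The substance of the argument is really carried by Theorem \ref{thm:char-deldelbar-minus}; the step that requires a little care is the first one, namely that non-degeneracy of the global pairing descends to each bidegree block and, via finite-dimensionality, produces the numerical symmetry $h^k_{BC}=h^{2n-k}_{BC}$. One could also bypass Theorem \ref{thm:char-deldelbar-minus} and argue directly: the block pairing factors as $\mathrm{id}\times\iota$ followed by the perfect Schweitzer pairing $H^{p,q}_{BC}(X)\times H^{n-p,n-q}_{A}(X)\to\mathbb{C}$, where $\iota\colon H^{n-p,n-q}_{BC}(X)\to H^{n-p,n-q}_{A}(X)$ is the map induced by the identity; non-degeneracy of the block pairing then forces $\iota$ to be injective in every bidegree, i.e. the natural map $H^{\bullet,\bullet}_{BC}(X)\to H^{\bullet,\bullet}_{A}(X)$ to be injective, which is exactly the $\partial\overline\partial$-Lemma (cf. \cite[Lemma 5.15]{deligne-griffiths-morgan-sullivan}).
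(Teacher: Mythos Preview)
Your proposal is correct and follows exactly the route the paper indicates: the non-degenerate pairing forces $h^{k}_{BC}=h^{2n-k}_{BC}$, Schweitzer duality converts this into $h^{k}_{BC}=h^{k}_{A}$, and Theorem~\ref{thm:char-deldelbar-minus} finishes. The alternative closing remark, factoring the block pairing through $\mathrm{id}\times\iota$ and the perfect Schweitzer pairing to deduce injectivity of $\iota\colon H^{\bullet,\bullet}_{BC}(X)\to H^{\bullet,\bullet}_{A}(X)$ directly, is a pleasant shortcut not spelled out in the paper.
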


\end{document}